 \newtheorem{thm}{Theorem}[section]
 \newtheorem{prop}[thm]{Proposition}
\theoremstyle{definition}
 \newtheorem{ex}[thm]{Example}
 \newtheorem{dfn}[thm]{Definition}
\theoremstyle{remark}
 \newtheorem{rem}[thm]{Remark}
 \numberwithin{equation}{section}
\DeclareMathOperator{\spn}{span}
\DeclareMathOperator{\rk}{rank}
\DeclareMathOperator{\inv}{inv}
\DeclareMathOperator{\Des}{Des}
\DeclareMathOperator{\Asc}{Asc}
\DeclareMathOperator{\sym}{\mathfrak{S}}
\newcommand\coveredby{\mathrel{\ooalign{$<$\cr
  \hidewidth\raise0.0ex\hbox{$\cdot\mkern2mu$}\cr}}}
\newcommand\covers{\mathrel{\ooalign{$>$\cr
  \hidewidth\raise0.0ex\hbox{$\cdot\mkern7mu$}\cr}}}
\renewcommand{\le}{\leqslant}\renewcommand{\leq}{\leqslant}
\renewcommand{\ge}{\geqslant}
\renewcommand{\setminus}{\smallsetminus}
 \author[Li and Sundaram]{Yifei Li and Sheila Sundaram}
\address{Yifei Li: University of Illinois at Springfield, Springfield, IL 62703  USA} 
\email{yli236@uis.edu}
 \address{Sheila Sundaram:  University of Minnesota, Minneapolis, MN 55455, USA}
\email{shsund@umn.edu}
\title{Segre powers of posets preserve EL-shellability}
\date{\today}
\begin{document}

\keywords{ascent, descent, Boolean lattice,  EL-labeling, subspace lattice, Segre product of posets}
\subjclass{05E10, 05E05, 20C30, 55P99}

\begin{abstract} For a bounded and graded poset $P$, we show that if  $P$ is EL-shellable, then so is its $t$-fold Segre power $P^{(t)}=P\circ \cdots \circ P$ ($t$ factors), as  defined by Bj\"orner and Welker [J. Pure Appl. Algebra, 198(1-3), 43--55 (2005)]. Our EL-labeling  leads to formulas for the rank-selected invariants of  $P^{(t)}$,    generalising those given   by Stanley for the subspace lattice [J. Combinatorial Theory Ser. A, 20(3):336-356, 1976]. 
\end{abstract}

\maketitle

\section{Introduction}\label{sec:Intro}
The \emph{Segre product}  of  posets   was first defined by Bj\"orner and Welker, who showed \cite[Theorem 1]{Segre_rees} that this operation preserves the property of being homotopy Cohen-Macaulay.  The  Segre square $P\circ P$ was studied by the first author in \cite{YLiqCSV2023}, when $P$ is the Boolean lattice $B_n$ and the subspace lattice $B_n(q)$.

The work of \cite{YLiqCSV2023} was further developed in \cite{LiSu-ECA2025}. Inspired by these papers, the present work studies the EL-shellability of the $t$-fold Segre power $P^{(t)}=P\circ \cdots\circ P$ ($t$ factors) of a pure poset $P$, for arbitrary $t$. 

The $t$-fold Segre power of the subspace lattice $B_n(q)$ appears in an early paper of Stanley \cite[Ex. 1.2]{RPS-BinomialPosetsJCTA1976}, where the concept of a binomial poset is introduced and studied.   As pointed out in \cite[Example 10]{Segre_rees}, the Segre power is a particular example of a binomial poset.  See also \cite[Ex.3.18.3]{ec1}.

The present paper is organized as follows.  After  a brief review of poset topology, in Section 2 we present our main results. 
We show in ~\Cref{thm:Segre-t_EL} that if $P$ admits an EL-labeling, then so does $P^{(t)}$. We describe the increasing and decreasing chains of $P^{(t)}$ in terms of those of $P$, giving general combinatorial formulas for the rank-selected Betti numbers of $P^{(t)}$ in ~\Cref{thm:mu-P-to-t-Segre-of-P-rank-sel}.  Finally in Section~\ref{sec:tfold-subsp-lattice} we discuss the special case of the lattice $B_n(q)$ of subspaces of an $n$-dimensional vector space over a field with $q$ elements.  We apply ~\Cref{thm:Segre-t_EL}
to obtain an explicit EL-labeling of  the $t$-fold Segre power of 
$B_n^{(t)}(q)$.

\section{Segre powers preserve EL-shellability}\label{sec:SegreEL}

For a positive integer $n$, let $[n]$ denote the set $\{1,\ldots,n\}$.  Good general references for poset topology are \cite{BjTopMeth1995} and \cite{WachsPosetTop2007}. 

A poset $P$ is 
\begin{enumerate}
\item \emph{bounded}
 if it has a unique minimal, denoted by  $\hat 0$, and a unique maximal element, denoted by $\hat 1$;  
 \item \emph{graded} if in any interval $(x,y)$, every maximal chain (with respect to inclusion) has the same length.
\end{enumerate}

If the poset $P$ is bounded and graded, it has a \emph{rank function}  $\rk(x)$, defined as the length of any maximal  chain from  $\hat 0$ to $x$.  The \emph{rank of $P$} is then the rank of $\hat 1$.

Recall \cite{ec1} that the product poset $P\times Q$ of two posets $P,Q$ has order relation defined by $(p,q)\le (p',q') $ if and only if $p\le_P p'$ and $q\le_Q q'$. Segre products are defined in greater generality by Bj\"orner and Welker  in \cite{Segre_rees}. As in \cite{YLiqCSV2023} and \cite{LiSu-ECA2025}, this paper is concerned with the following special case. 
\begin{dfn}[\cite{Segre_rees}]\label{def:Segre-prod} 
 Let $P$ be a bounded and graded poset. The $t$-fold Segre power ${P\circ \cdots \circ P}$ ($t$ factors), denoted $P^{(t)}$,  is  defined  for all $t\ge 2$ to be the induced subposet of the $t$-fold product poset $P\times \cdots \times P$ ($t$ factors) consisting of $t$-tuples $(x_1,\ldots, x_t)$ such that $\rk(x_i)=\rk(x_j), 1\le i,j\le t$.  The cover relation in $P^{(t)}$ is thus 
$(x_1,\ldots,x_t) \coveredby (y_1,\ldots, y_t)$ if and only if $x_i\coveredby y_i$ in $P$, for all $i=1,\ldots, t$.   When $t=1$ we set $P^{(1)}$ equal to $P$.
\end{dfn}
It follows that $P^{(t)}$ is also a bounded and graded poset which inherits the rank function of $P$. 

 Figure~\ref{fig:Segre square}, reproduced from \cite{LiSu-ECA2025} for convenience, shows the Segre square $P\circ P$ of a poset $P$,  an induced subposet of  $P\times P$. 
Missing in  $P\circ P$ are   these elements in the product $P\times P$:
$(a,c), (a,d), (b,c), (b,d), (c,a), (c,b),$ $ (d,a), (d,b)$, 
as well as all $(\hat 0, y), (y, \hat 0), y\ne \hat 0$, and all $(x, \hat 1), (\hat 1, x), x\ne \hat 1$.
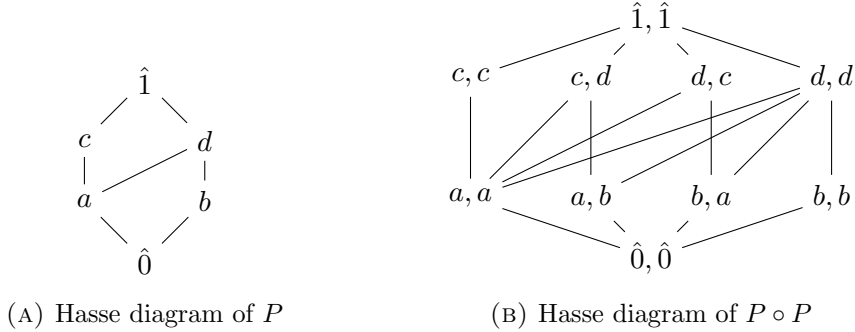
\begin{figure}\
    \begin{subfigure}{0.3\textwidth}
        \centering
        \begin{tikzpicture}[scale=0.8]
        \node (max) at (0,0) {$\hat{1}$};
        \node (c) at (-1,-1) {$c$};
        \node (d) at (1,-1) {$d$};
        \node (a) at (-1,-2) {$a$};
        \node (b) at (1,-2) {$b$};
        \node (min) at (0,-3) {$\hat{0}$};
        \draw (min)--(a)--(c)--(max)--(d)--(a);
        \draw (min)--(b)--(d);
    \end{tikzpicture}
    \caption{Hasse diagram of $P$}
    \end{subfigure}
    \begin{subfigure}{0.5\textwidth}
        \centering
        \begin{tikzpicture}[scale=0.8]
        \node (max) at (0,0) {$\hat{1}, \hat{1}$};
        \node (cc) at (-3,-1) {$c,c$};
        \node (cd) at (-1,-1) {$c,d$};
        \node (dc) at (1,-1) {$d,c$};
        \node (dd) at (3,-1) {$d,d$};
        \node (aa) at (-3,-3) {$a,a$};
        \node (ab) at (-1,-3) {$a,b$};
        \node (ba) at (1,-3) {$b,a$};
        \node (bb) at (3,-3) {$b,b$};
        \node (min) at (0,-4) {$\hat{0}, \hat{0}$};
        \draw (min)--(aa)--(cc)--(max)--(cd)--(aa)--(dc)--(max);
        \draw (aa)--(dd);
        \draw (ab)--(cd);
        \draw (ba)--(dc);
        \draw  (min)--(ab)--(dd)--(max);
        \draw  (min)--(ba)--(dd);
        \draw (min)--(bb)--(dd);
    \end{tikzpicture}
    \caption{Hasse diagram of $P\circ P$}
    \end{subfigure}
\caption{$P\circ P$ is an induced subposet of the product poset $P\times P$}
\label{fig:Segre square}
\end{figure}

An \emph{edge labeling} of a bounded poset $P$ is a map $\lambda: \mathcal E(P)\longrightarrow \Lambda$, where $\mathcal E(P)$ is the set of covering relations $x\coveredby y$ of $P$ and $\Lambda$ is some poset. A maximal chain $c=(x\coveredby x_1\coveredby \cdots \coveredby x_r\coveredby y)$ in the interval $[x,y]$, where $x_0=x$ and $x_{r+1}=y$, 
has an \emph{ascent} (respectively, \emph{descent}) at some $i\in [r]$ 
if $\lambda(x_{i-1},x_i)<\lambda(x_{i},x_{i+1})$ (respectively, $\lambda(x_{i-1},x_i)\not<\lambda(x_{i},x_{i+1})$). 
  The chain $c$ in $[x,y]$ 
is \emph{increasing} if $\lambda (x,x_1)<\lambda(x_1,x_2)<\cdots<\lambda(x_r,y)$. A chain $c$ is called \emph{decreasing} if there is no $i\in [r]=\{1,2,\dots,r\}$ such that $\lambda(x_{i-1},x_i)<\lambda(x_i,x_{i+1})$ in $\Lambda$. 

The descent set $\Des(c)$ of the chain $c$ is then the set $\{i\in [r-1]:  \lambda(x_{i-1},x_i)\not<\lambda(x_i,x_{i+1})\}$.
Each maximal chain $c$ in $[x,y]$ is associated, via its edge labels,  with a word $$\lambda (c)=\lambda (x,x_1)\lambda(x_1,x_2)\cdots\lambda(x_r,y).$$
For two maximal chains $c_1, c_2$ in $[x,y]$, if the word $\lambda (c_1)$ lexicographically precedes the word $\lambda (c_2)$, we say that the chain $c_1$ lexicographically precedes the chain $c_2$ and we denote this by $c_1<_Lc_2$.  

Define the descent set of a word $w_1\cdots w_r$ to be the set $\Des(w_1\cdots w_r):=\{i\in [r-1]:  w_i\not<w_{i+1}\}$, so that $\Des(c)=\Des(\lambda(c))$.  Also define $\Asc(c)=\Asc(\lambda(c))$ to be the set of  ascents in the (label sequence of) the chain $c$.  Thus $\Des(c)=[r-1]\setminus \Asc(c)$, where $r$ is the length of the chain $c$.

Finally, by a maximal chain in $P$ we will mean a maximal chain from the unique least element $\hat 0$ of $P$ to the unique greatest element $\hat 1$ of $P$.

\begin{dfn}[{\cite[Definition 2.1]{BW2}, \cite[Definition 3.2.1]{WachsPosetTop2007}}]
\label{def:EL} An edge labeling is called an \emph{EL-labeling} (edge lexicographical labeling) if for every interval $[x, y]$ in $P$,
\begin{enumerate}
\item there is a unique increasing maximal chain $c$ in $[x, y]$, and
\item $c<_Lc'$ for all other maximal chains $c'$ in $[x, y]$.
\end{enumerate}
If the poset $P$ admits an EL-labeling, it is called \emph{EL-shellable}.
\end{dfn}

\begin{dfn}[{\cite{Bac1980, BjTopMeth1995}}] Let $P$ be a  bounded and graded  poset, with rank function $\mathrm{rk}$,  and let $\mathbbm{k}$ be any field.  Then $P$ is \emph{Cohen-Macaulay} over $\mathbbm{k}$ if for every open interval $(x,y)$, the reduced simplical homology $\tilde{H}_i(x,y)$ of $(x,y)$  {vanishes in all but the top dimension} $\mathrm{rk}(y)-\mathrm{rk}(x)-2$.  In particular this is true for $P=(\hat 0, \hat 1)$ itself.
\end{dfn}

\begin{thm}[{\cite{BjTopMeth1995, BW2, BjWachsTAMSII1996}, \cite[Proposition 3.8.6]{ec1}\label{thm:poset-topology-mu}}]  Let $P$ be a bounded and graded  poset, of rank $n$. Let $\mu(P)$ denote the M\"obius number of $P$. 
The following quantities are equal:
\begin{enumerate}
    \item $(-1)^{n-2}\mu(P)$.
    \item  $(-1)^{n-2}$ times the reduced Euler characteristic of the order complex of $P$.
\end{enumerate}
     If in addition $P$ has an EL-labeling, then the order complex of the poset $P$ has the homotopy type of a wedge of spheres in the top dimension $n-2$, and the poset is Cohen-Macaulay.  In particular, all homology vanishes except possibly in the top dimension, and the above numbers are equal to each of the following.
\begin{enumerate}
    \item[(3)]The dimension of the top homology of the order complex of $P$.
    \item[(4)] The number of spheres in  the order complex of $P$.
    \item [(5)]The number of decreasing maximal chains in the EL-labeling.
\end{enumerate}
\end{thm}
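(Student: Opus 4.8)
This statement is a synthesis of three classical facts, and the plan is to assemble them in sequence. Throughout, write $\bar P=(\hat 0,\hat 1)$ for the proper part, whose order complex $\Delta(\bar P)$ is pure of dimension $n-2$, since a maximal chain of $P$ has $n-1$ interior elements. I would first establish $(1)=(2)$ for an \emph{arbitrary} bounded graded poset, with no labeling assumed, via Philip Hall's theorem. Unwinding the recursive definition of the M\"obius function expresses $\mu(P)=\mu(\hat 0,\hat 1)$ as the alternating sum $\sum_{\ell\ge 1}(-1)^\ell c_\ell$, where $c_\ell$ counts the chains of length $\ell$ from $\hat 0$ to $\hat 1$. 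Such a chain has $\ell-1$ interior elements and hence corresponds to an $(\ell-2)$-dimensional face of $\Delta(\bar P)$; reindexing turns this alternating sum into $\sum_{j\ge -1}(-1)^j f_j$, which is precisely the reduced Euler characteristic $\tilde\chi(\Delta(\bar P))$. Multiplying by $(-1)^{n-2}$ gives $(1)=(2)$.

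Next, assuming the EL-labeling $\lambda$, the key input is the Bj\"orner--Wachs theorem that $\lambda$ induces a \emph{shelling} of $\Delta(\bar P)$: listing the maximal chains of $P$ in increasing lexicographic order of their label words $\lambda(c)$ gives a shelling order of the facets. The technical core, which I expect to be the main obstacle, is to verify this shelling property and to identify the restriction face $R(c)$ of each facet $c$ as the set of interior elements sitting at the descent positions of $c$, so that $R(c)=\{x_i:i\in\Des(c)\}$. Granting this, the standard homotopy theory of shellable complexes applies: a pure shellable complex of dimension $n-2$ is homotopy equivalent to a wedge of $(n-2)$-spheres, with exactly one sphere for each facet satisfying $R(c)=c$, and its reduced homology is free and concentrated in degree $n-2$. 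This yields $(3)=(4)$ together with the vanishing of all lower homology. Since $\lambda$ restricts to an EL-labeling on every subinterval $[x,y]$, the same argument applies there, giving the Cohen--Macaulay property.

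Finally I would match up the remaining quantities. A facet satisfies $R(c)=c$ precisely when $\Des(c)$ is the full index set, which is exactly the condition that $c$ be a \emph{decreasing} maximal chain in the sense defined above; hence the number of spheres equals the number of decreasing chains, giving $(4)=(5)$. For a wedge of $N$ spheres in dimension $n-2$ one has $\tilde\chi=(-1)^{n-2}N$, so $(-1)^{n-2}\tilde\chi=N$, which reconnects $(2)$ to $(4)$ and closes the loop with $(1)$ through Hall's theorem. Of the three ingredients, Hall's theorem and the Euler-characteristic bookkeeping are routine; the real content is the middle step—that lexicographic order is a genuine shelling with descent sets as restriction faces—for which I would follow the Bj\"orner--Wachs development cited in the statement.
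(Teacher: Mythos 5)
Your proposal is correct, and it is essentially the argument behind the result as the paper presents it: the paper gives no proof of its own for this theorem, citing it instead from Bj\"orner--Wachs and Stanley, and your three ingredients (Philip Hall's theorem for $(1)=(2)$, the Bj\"orner--Wachs theorem that lexicographic order on maximal chains is a shelling with restriction faces at descent positions, and the wedge-of-spheres homotopy type of pure shellable complexes with one sphere per facet satisfying $R(c)=c$) are exactly the contents of those cited references. Deferring the technical core---the verification of the shelling property and the identification of the restriction faces---to Bj\"orner--Wachs is appropriate, since that is precisely what the paper itself does by citation.
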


Let $P$ be a finite graded bounded poset of rank $n$, and let $J\subset [n-1]=\{1,\ldots,n-1\}$ be any subset of nontrivial ranks.  Let $P_J$ denote  the rank-selected bounded subposet $P_J$ of $P$ consisting of elements in the rank-set $J$, together with $\hat 0$ and $\hat 1$.
Stanley  \cite[Section 3.13]{ec1} defined two rank-selected invariants $\tilde\alpha_P(J)$ and $\tilde\beta_P(J)$ 
as follows.   Then 

\begin{itemize}
    \item
$\tilde\alpha_P(J)$ is the number of maximal chains in the rank-selected subposet $P_J$, and 
\item
$\tilde\beta_P(J)$ is the integer defined by the equation 
\[\tilde\beta_P(J):=\sum_{U\subseteq J} (-1)^{|J|-|U|}\tilde\alpha_P(U).\]  
Equivalently, 
\[\tilde\alpha_P(J)=\sum_{U\subseteq J} \tilde\beta_P(U).\]
\end{itemize}
One also has the formula for the M\"obius number of the rank-selected subposet $P_J$ \cite[Eqn. (3.54)]{ec1}:
\begin{equation}\label{eqn:rank-sel-inv-to-mu}\tilde\beta_P(J)=(-1)^{|J|-1} \mu_{P_J}(\hat 0, \hat 1).\end{equation}

\begin{rem}\label{rem:rank-invariants-EL}
    If the poset $P$ has an edge-labeling $\lambda$ which is an EL-labeling, then because of condition (2) in Definition~\ref{def:EL}, one sees that 
 $\tilde\alpha_P(J)$ equals the number of maximal chains in $P$ whose descent set with respect to the labeling $\lambda$ is contained in $J$, and $\tilde\beta_P(J)$ is the number of maximal chains in $P$ with descent set exactly $J$.  For details see \cite[Proof of Theorem 5.2]{Wachs-d-div-AIM1996}, \cite[Theorem 2.2]{BjGarsiaStanley-CM-1982}, and also  the R-labeling version in \cite[Theorem 3.14.2]{ec2}.
\end{rem}
  It is known that rank-selected subposets of a Cohen-Macaulay poset are also Cohen-Macaulay \cite[Theorem~6.4]{Bac1980}.
 
Again let $P$ be a graded and bounded poset of rank $n$, with an edge labeling $\lambda: \mathcal E(P)\longrightarrow \Lambda$ for some poset $\Lambda$. 
Let $\Lambda^t$ denote the $t$-fold product poset ${\Lambda\times\cdots\times\Lambda}$ ($t$ factors). 
 The edge labeling $\lambda$ extends  to an edge labeling 
 $\lambda^{(t)}:\mathcal E(P^{(t)})\longrightarrow \Lambda^t$ 
 of the $t$-fold Segre power $P^{(t)}$, by setting 
 \[\lambda^{(t)}((x_1,\ldots,x_t)\coveredby (y_1,\ldots,y_t))
 =(\lambda(x_1\coveredby y_1), \ldots, \lambda(x_t\coveredby y_t)).\]

 The label of a maximal chain $C$ in $P^{(t)}$ from $X=(x_1, \ldots, x_t)$ to $Y=(y_1, \ldots, y_t)$, 
 \[X=X_0\coveredby X_1\coveredby \cdots \coveredby X_r=Y, \] is  the concatenated string of $t$-tuples of edge labels $\lambda^{(t)}(X_i, X_{i+1}).$

 A maximal chain in the Segre power $P^{(t)}$ from $X=(x_1, \ldots, x_t)$ to $Y=(y_1, \ldots, y_t)$ is in bijection with a $t$-tuple of chains $(c_1, \ldots, c_t)$ where each $c_i$ is a maximal chain from $x_i$ to $y_i$. 
 
To illustrate these ideas, consider the 
 Boolean lattice $B_n$ of subsets of an $n$-element set, which  has a well-known EL-labeling defined by $\lambda(A\coveredby B)= a$ where $B\setminus A=\{a\}$; see  \cite[Section 3.2.1]{WachsPosetTop2007}.  In this case the poset $\Lambda$ is the totally ordered set $[n]$.
 
 \begin{ex}\label{ex:t-Segre-chain-label} Let $P=B_4$ be the Boolean lattice of subsets of the set $[4]=\{1,2,3,4\}$, and consider the following maximal chain in $P^{(3)}=P\circ P\circ P$, with labels in the product poset $[4]\times [4]\times[4]$ shown above each cover relation:

 \[ C=(\hat 0 \overset{(1,3,3)}{\coveredby} (1,3,3) \overset{(2,2,1)}{\coveredby} (12, 23, 13) \overset{(3,1,4)}{\coveredby} (123, 123, 134) \overset{(4,4,2)}{\coveredby} \hat 1),\]
 omitting the braces for the subsets for ease of reading; thus $(12, 23, 13)$ represents the 3-tuple of subsets $(\{1,2\}, \{2,3\}, \{1,3\})$.

 Then according to the definition of edge labeling, the label of the chain $C$ is the following concatenation of 3-tuples, or equivalently a ``word" in the alphabet of 3-tuples:
 \[\lambda^{(3)}(C)=(1,3,3)(2,2,1)(3,1,4)(4,4,2).\]

    The chain $C$ maps bijectively to the 3-tuple of chains $(c_1, c_2, c_3)$ where 
    \[c_1=(\hat 0\overset{1}{\coveredby} 1\overset{2}{\coveredby} 12 \overset{3}{\coveredby} 123 \overset{4}{\coveredby} \hat 1),
     c_2=(\hat 0\overset{3}{\coveredby} 3\overset{2}{\coveredby} 23 \overset{1}{\coveredby} 123 \overset{4}{\coveredby} \hat 1),
     c_3=(\hat 0\overset{3}{\coveredby} 3\overset{1}{\coveredby} 13 \overset{4}{\coveredby} 134 \overset{2}{\coveredby} \hat 1).\]
     The EL-labels of these chains $c_i$ in $P=B_4$ are the permutations $1234, 3214, 3142$.

     Now consider another chain, $D$, in $P^{(3)}$, 
     \[ D=(\hat 0 \overset{(1,3,3)}{\coveredby} (1,3,3) \overset{(2,4,4)}{\coveredby} (12, 34, 34) \overset{(3,1,1)}{\coveredby} (123, 134, 134) \overset{(4,2,2)}{\coveredby} \hat 1).\]
     We have 
     \[\lambda^{(3)}(D)=(1,3,3)(2,4,4)(3,1,1)(4,2,2).\]
      The chain $D$ maps bijectively to the 3-tuple of chains $(d_1, d_2, d_3)$ where 
    \[d_1=(\hat 0\overset{1}{\coveredby} 1\overset{2}{\coveredby} 12 \overset{3}{\coveredby} 123 \overset{4}{\coveredby} \hat 1),
     d_2=(\hat 0\overset{3}{\coveredby} 3\overset{4}{\coveredby} 34 \overset{1}{\coveredby} 134 \overset{2}{\coveredby} \hat 1),
     d_3=(\hat 0\overset{3}{\coveredby} 3\overset{4}{\coveredby} 34 \overset{1}{\coveredby} 134 \overset{2}{\coveredby} \hat 1).\]
     The EL-labels of these chains $d_i$ in $P=B_4$ are the permutations $1234, 3412, 3412$.
     
     We see that $\lambda^{(3)}(C)$ precedes $\lambda^{(3)}(D)$ in the lexicographic order on the words in $[4]^3=[4]\times[4]\times[4]$,
     because $(2,2,1)<(2,4,4)$ in the product poset $[4]^3$.

     The chain $C$ has no ascents and is therefore a decreasing chain.  The chain $D$ has an ascent in position 1 and 3, since $(1,3,3)<(2,4,4)$ and $(3,1,1)<(4,2,2)$ in the poset  $[4]^3$.  Notice that the chains $c_1, c_2, c_3$ have no common ascents, while  1 and 3 are the only common ascents of the chains $d_1, d_2, d_3$. 
 \end{ex}
 
This extension of the labeling from $P$ to $P^{(t)}$ has the following important property, our first  result.
 \begin{thm} \label{thm:Segre-t_EL} Let $\lambda: \mathcal E(P)\longrightarrow \Lambda$ be an EL-labeling of the graded and bounded poset $P$ of rank $n$. Then $\lambda^{(t)}:\mathcal{E}(P^{(t)})\longrightarrow \Lambda^{t}$ is an EL-labeling of the $t$-fold Segre power $P^{(t)}$.
Identifying a maximal chain $C$ in $P^{(t)}$ with the $t$-tuple of maximal chains $(c_1, \ldots, c_t)$, where $c_i$ is a maximal chain in $P$,   we have the following.
\begin{enumerate}
\item  $j\in [n-1]$ is a descent of the $t$-tuple $(c_1,\ldots,c_t)$ if and only if $j$ is \emph{not} a common ascent of $ c_1, \ldots, c_t$.  Hence   the descent set $\Des(c_1,\ldots,c_t)$ is the complement in $[n-1]$ of  $\cap_{i=1}^t \Asc(c_i)$. 
\item
The decreasing maximal chains of $P^{(t)}$ with respect to the EL-labeling $\lambda^{(t)}$ are precisely the $t$-tuples of maximal chains $(c_1,\ldots,c_t)$ in $P$ with no common ascent.

\item 
The rank-selected invariant $\tilde\beta_{P^{(t)}}(J)$ 
 is the number of $t$-tuples   $(c_1,\ldots,c_t)$ of maximal chains $c_i$ in $P$ 
such that $[n-1]\setminus J$ is precisely the set of common ascents of $ c_1, \ldots, c_t$. 
\item   The rank-selected invariant 
    $\tilde\alpha_{P^{(t)}}(J)$ is the number of $t$-tuples   $(c_1,\ldots,c_t)$ of maximal chains $c_i$ in $P$ such that $J$ contains the descent set of $(c_1, \ldots, c_t)$. 
\end{enumerate}
\end{thm}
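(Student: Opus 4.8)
The plan is to verify the two defining properties of an EL-labeling for $\lambda^{(t)}$ on every interval $[X,Y]$ of $P^{(t)}$, and then to read off statements (1)--(4) as consequences. Throughout I use the bijection recalled above between a maximal chain $C$ in $[X,Y]$, with $X=(x_1,\dots,x_t)$ and $Y=(y_1,\dots,y_t)$, and a $t$-tuple $(c_1,\dots,c_t)$ of maximal chains $c_i$ in $[x_i,y_i]$; this is well defined since $P^{(t)}$ inherits the rank function of $P$, so all the $[x_i,y_i]$ have the same length as $[X,Y]$. Because the cover relations of $P^{(t)}$ raise every coordinate simultaneously, the $k$-th label of $C$ is the $t$-tuple whose $i$-th entry is the $k$-th label of $c_i$. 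The key structural point is that a step of $C$ is an \emph{ascent} exactly when all $t$ coordinates strictly increase there, i.e.\ exactly when the corresponding step is an ascent of $c_i$ for every $i$; this coordinatewise-strict reading is the one forced by statement (1), and it is what makes the increasing chain unique below.

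First I would settle the increasing chain. By the observation above, $C=(c_1,\dots,c_t)$ has a strictly increasing label word if and only if each coordinate word is strictly increasing, that is, if and only if each $c_i$ is the increasing chain of $[x_i,y_i]$. Since $\lambda$ is an EL-labeling of $P$, each $[x_i,y_i]$ has exactly one increasing chain $c_i^{\mathrm{inc}}$, so $C^{\mathrm{inc}}:=(c_1^{\mathrm{inc}},\dots,c_t^{\mathrm{inc}})$ is the unique increasing maximal chain of $[X,Y]$.

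The technical heart is the lexicographic minimality $C^{\mathrm{inc}}<_L C$ for every other maximal chain $C=(c_1,\dots,c_t)$, which I would prove entirely at the level of label words, since in a general EL-labeling the word need not determine the chain. For each coordinate $i$ the EL-property of $P$ gives that the word of $c_i^{\mathrm{inc}}$ is lexicographically $\le$ the word of $c_i$, strictly so unless $c_i=c_i^{\mathrm{inc}}$; let $k_i$ be the first position where these two words differ (and $k_i=\infty$ when they coincide), so that the entry of $c_i^{\mathrm{inc}}$ is strictly smaller in $\Lambda$ at position $k_i$. Put $K=\min_i k_i$, which is finite because $C\neq C^{\mathrm{inc}}$. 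For every $j<K$ all coordinates agree, so the $\Lambda^t$-words of $C^{\mathrm{inc}}$ and $C$ agree before position $K$; at position $K$, every coordinate with $k_i=K$ is strictly smaller in $\Lambda$ while every coordinate with $k_i>K$ is equal. Hence the $K$-th letter of $\lambda^{(t)}(C^{\mathrm{inc}})$ is $\le$ the $K$-th letter of $\lambda^{(t)}(C)$ in every coordinate and strictly smaller in at least one coordinate, i.e.\ it is strictly smaller in the product order on $\Lambda^t$. This gives $C^{\mathrm{inc}}<_L C$, completing the proof that $\lambda^{(t)}$ is an EL-labeling.

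The remaining statements then follow. By the structural point, a position $j$ fails to be an ascent of $C$ exactly when some coordinate fails to ascend there, i.e.\ when $j\notin\bigcap_{i=1}^t\Asc(c_i)$; this is statement (1), and the special case $\Asc(C)=\emptyset$ is statement (2), that the decreasing maximal chains are the tuples with no common ascent. For (3) and (4), now that $\lambda^{(t)}$ is known to be an EL-labeling, \Cref{rem:rank-invariants-EL} identifies $\tilde\beta_{P^{(t)}}(J)$ with the number of maximal chains of $P^{(t)}$ whose descent set is exactly $J$, and $\tilde\alpha_{P^{(t)}}(J)$ with the number whose descent set is contained in $J$; rewriting $\Des(C)=J$ and $\Des(C)\subseteq J$ through statement (1) yields the two stated counts in terms of the common ascent set $\bigcap_{i=1}^t\Asc(c_i)$. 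The step I expect to demand the most care is the lexicographic minimality: one must run the single joint ``first disagreement'' argument rather than comparing chains coordinate by coordinate, and one must keep straight that the lexicographic comparison of words is governed by the product order $\le$ on $\Lambda^t$, whereas the notions of ascent and of increasing chain are governed by the stricter requirement that \emph{all} coordinates increase.
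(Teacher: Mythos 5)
Your proof is correct and follows essentially the same route as the paper's: the same identification of maximal chains of $P^{(t)}$ with $t$-tuples of maximal chains of $P$, the same characterization of ascents of the tuple as common ascents of its coordinates, uniqueness of the increasing chain deduced coordinate-wise from the EL-property of $\lambda$, and parts (3)--(4) obtained from Remark~\ref{rem:rank-invariants-EL}. The one substantive difference is in your favor: where the paper disposes of condition (2) of Definition~\ref{def:EL} in a single sentence (the increasing chain is lexicographically first ``because $c_i$ satisfies this condition for all $i\in[t]$''), you supply the joint first-disagreement argument with $K=\min_i k_i$, and this extra care is genuinely needed, since the coordinates' first disagreements can occur at different positions, so coordinate-wise lexicographic precedence does not transfer to the tuple words automatically. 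One caveat, which you share with the paper rather than introduce: your ``key structural point'' (a step of $C$ ascends iff every $c_i$ ascends there) is adopted as a reading ``forced by statement (1)'' rather than derived from the definitions; under the product order on $\Lambda^t$ this equivalence holds only if no maximal chain of $P$ carries equal labels on consecutive edges, for if some $c_i$ repeats a label at a step where another coordinate strictly rises, the tuple still rises strictly in the product order, so that step is an ascent of $C$ but not a common ascent (and uniqueness of the increasing chain in $P^{(t)}$ can then fail, e.g.\ for $B_2$ with chain labels $12$ and $22$). The paper's observation (ii) asserts the identical equivalence from the definition of the product order, so this is a subtlety of the theorem itself rather than a gap in your write-up relative to the paper's proof.
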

\begin{proof}
Let $[X,Y], X\ne Y,$ be a closed interval in $P^{(t)}$, where $X=(x_1,\dots,x_t)$, $Y=(y_1,\dots,y_t)$ and $x_i< y_i$ in $P$ for all $i\in [t]$. Two important observations about 
 chains in $[X,Y]$ and the labeling $\lambda^{(t)}$ follow, the first of which has already been made.
 \begin{enumerate}
 \item[(i)]
Maximal chains $C$ in the interval $[X,Y]$  are  in bijection with $t$-tuples of maximal chains $(c_1,\ldots,c_t)$ where $c_j$ is a maximal chain in the interval $[x_j,y_j]$ of $P$. 
\item[(ii)] Recall that the EL-label $\lambda^{(t)}(C)$ is a word in the alphabet $\Lambda^t$ of $t$-tuples of elements of $\Lambda$. By definition of the labeling $\lambda^{(t)}$, the chain $C$ corresponding to the $t$-tuple of chains $(c_1,\ldots,c_t)$ in $P$ has an ascent  at $i$  if and only if every chain $c_j$ in $[x_j,y_j]$, $1\le j\le t$,   has an ascent  at $i$. This is because of the order relation in the product poset $\Lambda^t$: $(a_1,\ldots, a_t)< (b_1,\ldots, b_t)$ if and only if $a_i\le b_i$ for all $i$, and there is at least one $i$ such that $a_i\ne b_i$. Furthermore, the chain $C$ corresponding to the $t$-tuple of chains $(c_1,\ldots,c_t)$ in $P$ has a descent  at $i$  if and only if there is at least one chain $c_j$ in $[x_j,y_j]$, $1\le j\le t$,   having a descent  at $i$. 
See Example~\ref{ex:t-Segre-chain-label}. 
\end{enumerate}

Since  $\lambda$  is an EL-labeling for $P$, there is a unique increasing maximal chain $c_i$ in each $[x_i,y_i]$ that lexicographically precedes all other chains in the same interval. Then the chain in $[X,Y]$ 
which is in bijection with the $t$-tuple  $(c_1,\ldots,c_t)$ 
must be the unique increasing maximal chain in $[X,Y]$. Any other chain has non-increasing labels (see Definition~\ref{def:EL}) in some $[x_i,y_i]$, hence is non-increasing in $[X,Y]$. This unique increasing maximal chain of $[X,Y]$ must also satisfy Part $(2)$ of Definition \ref{def:EL} because $c_i$ satisfies this condition for all $i\in [t]$. Hence $\lambda^{(t)}$ is an EL-labeling for $P^{(t)}$. 

Again from Definition \ref{def:EL}, a decreasing chain $C$ in $P^{(t)}$ has no ascents. By Item (ii), in the corresponding $t$-tuple of chains $(c_1,\ldots,c_t)$,  the chains $c_i$  have no common ascents. Parts (1) and (2) now follow.

By Remark~\ref{rem:rank-invariants-EL}, the rank-selected invariant $\tilde\beta_{P^{(t)}}(J)$ is the number of  maximal chains $(c_1,\ldots,c_t)$ in  $P^{(t)}$ with descent set exactly equal to $J$. By Item (ii) above again, this means no index $j\in J$ is a common ascent of 
 all $t$ chains $c_1,\ldots, c_t$, and conversely.  This establishes  Part (3), and hence also Part (4).
\end{proof}
The next observation is motivated by the features of two particular posets, the Boolean lattice $B_n$ of subsets of $[n]$ with respect to inclusion and the subspace lattice $B_n(q)$ of subspaces of the $n$-dimensional vector space $\mathbb{F}^n_q$ over a finite field of $q$ elements.  Both are modular geometric lattices; see \cite[Ex. 3.10.2]{ec1}.  The subspace lattice is a $q$-analogue of the Boolean lattice in the sense of Simion \cite{Simion1995}.

Let $P$ be a graded and bounded poset of rank $n$ with an EL-labeling  
$\lambda: \mathcal{E}(P)\rightarrow \Lambda$ for some poset $\Lambda$.  Let $w_n(\Lambda)$ denote the set of words of length $n$ in the alphabet $\Lambda$. 
 Then for every maximal chain $c=(\hat{0}\coveredby x_1\coveredby \cdots \coveredby x_{n-1}\coveredby\hat{1})$ in $P$, the word $\lambda(c)$  is an element of $w_n(\Lambda)$.  We write $w_n(\Lambda)^{\times t}$ for the $t$-fold Cartesian product $\underbrace{w_n(\Lambda)\times\cdots\times w_n(\Lambda)}_t.$

Define an integer-valued function $e\ell_\lambda$ on $w_n(\Lambda)$ by \[e\ell_\lambda(\sigma):=|\{\text{maximal chains $c$ in $P$}: \lambda(c)=\sigma\}|.\]

By Theorem~\ref{thm:poset-topology-mu}, for a poset with an EL-labeling, all homology vanishes except possibly in the top dimension.
From Theorem~\ref{thm:Segre-t_EL} we deduce the following formulas for the rank-selected M\"obius numbers, or equivalently the rank-selected Betti numbers of the $t$-fold Segre power of a poset $P$ with EL-labeling $\lambda$.  
\begin{thm}\label{thm:mu-P-to-t-Segre-of-P-rank-sel} With $P$ of rank $n$ as above, for each rank subset $J\subseteq [n-1]$, we have the following formulas for the dimension of the top homology module in each case:
\begin{enumerate}
    \item  
{$(-1)^{|J|-1}\mu(P_J)=\sum_{\substack{\sigma\in w_n(\Lambda)\\ \Des(\sigma)=J}}
e\ell_\lambda(\sigma),$} and hence

    $(-1)^{n-2}\mu(P)=\sum_{\substack{\sigma\in w_n(\Lambda)\\ 
    \Des(\sigma)=[n-1]}} e\ell_\lambda(\sigma)$.
    \item $(-1)^{|J|-1}\mu(P_J^{(t)})=\sum_{\substack{(\sigma^1,\ldots, \sigma^t)\in w_n(\Lambda)^{\times t}\\ \Des((\sigma^1,\ldots, \sigma^t))=J}} \prod_{i=1}^t e\ell_\lambda(\sigma^i)$,
and hence 

 $     (-1)^{n-2}\mu(P^{(t)})
    =\sum_{\substack{(\sigma^1,\ldots, \sigma^t)\in w_n(\Lambda)^{\times t}\\
    {\sigma^1,\ldots, \sigma^t \text{have no common ascents}} }}\prod_{i=1}^t e\ell_\lambda(\sigma^i).
 $
\end{enumerate}
    
\end{thm}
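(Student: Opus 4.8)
The plan is to reduce both parts to a single bookkeeping principle: rewrite each rank-selected M\"obius number as the rank-selected invariant $\tilde\beta$, interpret $\tilde\beta$ as a count of maximal chains with a prescribed descent set via Remark~\ref{rem:rank-invariants-EL}, and then organise those chains according to their EL-label words using the multiplicity function $e\ell_\lambda$. Throughout I would use that $P$ is Cohen--Macaulay (being EL-shellable, by Theorem~\ref{thm:poset-topology-mu}), so that each rank-selected subposet $P_J$ is Cohen--Macaulay as well; since $P_J$ has rank $|J|+1$, this makes $(-1)^{|J|-1}\mu(P_J)$ exactly the top homology dimension of $P_J$, and the identity $(-1)^{|J|-1}\mu(P_J)=\tilde\beta_P(J)$ is then \eqref{eqn:rank-sel-inv-to-mu}.

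For Part (1) I would start from \eqref{eqn:rank-sel-inv-to-mu} and Remark~\ref{rem:rank-invariants-EL}, which together give that $(-1)^{|J|-1}\mu(P_J)=\tilde\beta_P(J)$ equals the number of maximal chains $c$ in $P$ with $\Des(c)=J$. Since $\Des(c)=\Des(\lambda(c))$, I can partition these chains according to their label word $\sigma=\lambda(c)\in w_n(\Lambda)$: the descent set of a chain depends only on its word, and for a fixed word $\sigma$ the number of chains with $\lambda(c)=\sigma$ is precisely $e\ell_\lambda(\sigma)$ by definition. Summing $e\ell_\lambda(\sigma)$ over all words $\sigma$ with $\Des(\sigma)=J$ then yields the stated formula. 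The displayed consequence is the case $J=[n-1]$: there $P_J=P$, the sign becomes $(-1)^{n-2}$, and $\Des(\sigma)=[n-1]$ singles out the words with no ascent, consistent with item~(5) of Theorem~\ref{thm:poset-topology-mu}.

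Part (2) follows the same template, now applied to $P^{(t)}$ with the EL-labeling $\lambda^{(t)}$, which is legitimate because $P^{(t)}$ is EL-shellable by Theorem~\ref{thm:Segre-t_EL} (hence Cohen--Macaulay, so its rank-selected subposet $P_J^{(t)}$ at the ranks in $J$ is again Cohen--Macaulay). As before, \eqref{eqn:rank-sel-inv-to-mu} and Remark~\ref{rem:rank-invariants-EL} give $(-1)^{|J|-1}\mu(P_J^{(t)})=\tilde\beta_{P^{(t)}}(J)=|\{\text{maximal chains $C$ in $P^{(t)}$}:\Des(C)=J\}|$. Using the bijection between a maximal chain $C$ and the $t$-tuple $(c_1,\ldots,c_t)$ of its coordinate chains, together with Part~(1) of Theorem~\ref{thm:Segre-t_EL} (which identifies $\Des(C)$ with the complement of $\bigcap_{i}\Asc(c_i)$), the descent set of $C$ depends only on the $t$-tuple of words $(\sigma^1,\ldots,\sigma^t)=(\lambda(c_1),\ldots,\lambda(c_t))$. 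Partitioning the chains $C$ by this $t$-tuple of words, the number of $t$-tuples $(c_1,\ldots,c_t)$ realising a fixed $(\sigma^1,\ldots,\sigma^t)$ is $\prod_{i=1}^t e\ell_\lambda(\sigma^i)$ by the independence of the coordinate choices; summing over all $(\sigma^1,\ldots,\sigma^t)\in w_n(\Lambda)^{\times t}$ with $\Des((\sigma^1,\ldots,\sigma^t))=J$ gives the formula. Taking $J=[n-1]$ recovers the unrestricted M\"obius number, where $\Des=[n-1]$ translates into the words $\sigma^1,\ldots,\sigma^t$ having no common ascent.

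The argument is essentially an assembly of the earlier results, so I do not expect a deep obstacle; the one point demanding care is the well-definedness of the grouping by label words. I must check that the descent set of a chain --- or, for the Segre power, of a $t$-tuple of chains --- is a function of the underlying word(s) alone, so that summing $e\ell_\lambda$ (respectively $\prod_i e\ell_\lambda$) over words with a fixed descent set correctly enumerates the chains. For a single poset this is the identity $\Des(c)=\Des(\lambda(c))$; for $P^{(t)}$ it is exactly Part~(1) of Theorem~\ref{thm:Segre-t_EL}, since common ascents are determined by the label words. A secondary verification is the consistency of signs and ranks already noted in the first paragraph: the rank $|J|+1$ of $P_J$ (and of $P_J^{(t)}$) makes the top homology dimension carry the sign $(-1)^{(|J|+1)-2}=(-1)^{|J|-1}$, matching \eqref{eqn:rank-sel-inv-to-mu}.
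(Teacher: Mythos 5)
Your proposal is correct and follows essentially the same route as the paper: both reduce the rank-selected M\"obius numbers to $\tilde\beta$ via \eqref{eqn:rank-sel-inv-to-mu} and Remark~\ref{rem:rank-invariants-EL}, invoke Theorem~\ref{thm:Segre-t_EL} (the EL-shellability of $P^{(t)}$ and the descent/common-ascent dictionary) for Part (2), and then group chains by their label words with multiplicities $e\ell_\lambda(\sigma)$, respectively $\prod_i e\ell_\lambda(\sigma^i)$. Your write-up simply makes explicit the bookkeeping (independence of coordinate chains, well-definedness of descent sets on words, the Cohen--Macaulay interpretation of the sign) that the paper's terse proof leaves implicit.
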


\begin{proof}  Item (1) is simply a restatement of Theorem~\ref{thm:poset-topology-mu}, and Item (2) then follows from Part (3) of Theorem~\ref{thm:Segre-t_EL}; for each $t=1,\ldots, n$, $\sigma^i=\lambda(c_i)$ where $(c_1,\ldots,c_t)$ is the $t$-tuple of chains in $P$ corresponding to a decreasing chain in $P^{(t)}$. 
By Part (1) of Theorem~\ref{thm:Segre-t_EL}, 
 the statements follow. 
\end{proof}

Example~\ref{ex:t-Segre-chain-label}  
 gives an EL-labeling for $B_4^{(3)}$ where a maximal chain $C$ is in bijection with a $3$-tuple of maximal chains $c_i$ in $B_4$. The labels for the chains $c_i$ are permutations of $[4]$. In general, for an EL-labeling of a poset $P$, the edge labels in a chain need not be distinct.  The next example illustrates that it is possible for edges in the same chain to have repeated labels. 

\begin{ex}\label{ex:repeat-labels} Figure \textsc{\ref{fig:P-repeat-labels}} is an EL-labeling $\lambda: \mathcal{E}(P)\rightarrow [3]$ of a poset $P$ with repeated edge labels for the right-most maximal chain. The labeling $\lambda^{(2)}$ of the Segre product $P^{(2)}=P\circ P$ shown in Figure \textsc{\ref{fig:PP-repeat-labels}} is also an EL-labeling. The two ordered pairs of words in $[3]\times [3]$ with no common ascents are $(132, 212)$ and $(212, 132)$. Since $e\ell_\lambda(132)=1$ and $e\ell_\lambda(212)=1$, by Theorem \ref{thm:mu-P-to-t-Segre-of-P-rank-sel} Item $(2)$, we have $\mu(P^{(2)})=-2$, which agrees with $\mu(P^{(2)})$ computed directly from the poset. 
The poset $P^{(2)}$ has exactly two decreasing maximal chains $C_1=\hat{0}\coveredby a,b\coveredby d,d \coveredby \hat{1}$ and $C_2=\hat{0}\coveredby b,a\coveredby d,d \coveredby \hat{1}$, shown in Figure \textsc{\ref{fig:PP-repeat-labels}} with thicker edges.

\begin{figure}\
    \begin{subfigure}{0.3\textwidth}
        \centering
        \begin{tikzpicture}
        \node (max) at (0,0) {$\hat{1}$};
        \node (c) at (-1,-1) {$c$};
        \node (d) at (1,-1) {$d$};
        \node (a) at (-1,-2) {$a$};
        \node (b) at (1,-2) {$b$};
        \node (min) at (0,-3) {$\hat{0}$};
        \draw (min)--(a)--(c)--(max)--(d)--(a);
        \draw (min)--(b)--(d);
        \node (c1) at (-0.65,-0.4) {\textcolor{red}{\scriptsize{$\mathbf 3$}}};
        \node (d1) at (0.65,-0.4) {\textcolor{red}{\scriptsize{$\mathbf 2$}}};
        \node (ac) at (-1.15,-1.5) {\textcolor{red}{\scriptsize{$\bf2$}}};
        \node (bd) at (1.15,-1.5) {\textcolor{red}{\scriptsize{$\bf1$}}};
        \node (ad) at (0,-1.3) {\textcolor{red}{\scriptsize{$\bf3$}}};
        \node (0a) at (-0.65,-2.6) {\textcolor{red}{\scriptsize{$\bf1$}}};
        \node (0b) at (0.65,-2.6) {\textcolor{red}{\scriptsize{$\bf2$}}};
    \end{tikzpicture}
    \caption{An EL-labeling of $P$}
    \label{fig:P-repeat-labels}
    \end{subfigure}
    \begin{subfigure}{0.5\textwidth}
        \centering
        \begin{tikzpicture}
        \node (max) at (0,0) {$\hat{1}, \hat{1}$};
        \node (cc) at (-3,-1) {$c,c$};
        \node (cd) at (-1,-1) {$c,d$};
        \node (dc) at (1,-1) {$d,c$};
        \node (dd) at (3,-1) {$d,d$};
        \node (aa) at (-3,-3) {$a,a$};
        \node (ab) at (-1,-3) {$a,b$};
        \node (ba) at (1,-3) {$b,a$};
        \node (bb) at (3,-3) {$b,b$};
        \node (min) at (0,-4) {$\hat{0}, \hat{0}$};
        \draw (min)--(aa)--(cc)--(max)--(cd)--(aa)--(dc)--(max);
        \draw (aa)--(dd);
        \draw (ab)--(cd);
        \draw (ba)--(dc);
        \draw [very thick] (min)--(ab)--(dd)--(max);
        \draw [very thick] (min)--(ba)--(dd);
        \draw (min)--(bb)--(dd);
        \node (cc1) at (-1.6,-0.3) {\textcolor{red}{\rotatebox[origin=c]{18}{\scriptsize{$\mathbf{(3,3)}$}}}};
        \node (cd1) at (-0.35,-0.6) {\textcolor{red}{\rotatebox[origin=c]{46}{\scriptsize{$\mathbf{(3,2)}$}}}};
        \node (dc1) at (0.35,-0.6) {\textcolor{red}{\rotatebox[origin=c]{-46}{\scriptsize{$\mathbf{(2,3)}$}}}};
        \node (dd1) at (1.6,-0.3) {\textcolor{red}{\rotatebox[origin=c]{-18}{\scriptsize{$\mathbf{(2,2)}$}}}};
        \node (aacc) at (-3.4,-2) {\textcolor{red}{\scriptsize{$\mathbf{(2,2)}$}}};
        \node (aacd) at (-2.2,-1.9) {\textcolor{red}{\rotatebox[origin=c]{45}{\scriptsize{$\mathbf{(2,3)}$}}}};
        \node (aacd) at (2.1,-2.2) {\textcolor{red}{\rotatebox[origin=c]{45}{\scriptsize{$\mathbf{(1,3)}$}}}};
        \node (aadc) at (-1.45,-2) {\textcolor{red}{\rotatebox[origin=c]{26}{\scriptsize{$\mathbf{(3,2)}$}}}};
        \node (aadd) at (0.01,-1.85) {\textcolor{red}{\rotatebox[origin=c]{18}{\scriptsize{$\mathbf{(3,3)}$}}}};
        \node (abdd) at (0.45,-2.1) {\textcolor{red}{\rotatebox[origin=c]{26}{\scriptsize{$\mathbf{(3,1)}$}}}};
        \node (abcd) at (-0.63,-1.4) {\textcolor{red}{\scriptsize{$\mathbf{(2,1)}$}}};
        \node (badc) at (1.35,-2.15) {\textcolor{red}{\scriptsize{$\mathbf{(1,2)}$}}};
        \node (bbdd) at (3.4,-2) {\textcolor{red}{\scriptsize{$\mathbf{(1,1)}$}}};
        \node (0aa) at (-1.6,-3.7) {\textcolor{red}{\rotatebox[origin=c]{-18}{\scriptsize{$\mathbf{(1,1)}$}}}};
        \node (0bb) at (1.6,-3.7) {\textcolor{red}{\rotatebox[origin=c]{18}{\scriptsize{$\mathbf{(2,2)}$}}}};
        \node (0ab) at (-0.35,-3.4) {\textcolor{red}{\rotatebox[origin=c]{-46}{\scriptsize{$\mathbf{(1,2)}$}}}};
        \node (0ba) at (0.35,-3.4) {\textcolor{red}{\rotatebox[origin=c]{46}{\scriptsize{$\mathbf{(2,1)}$}}}};
    \end{tikzpicture}
    \caption{EL-labeling of $P\circ P$}
    \label{fig:PP-repeat-labels}
    \end{subfigure}
\caption{An EL-labeling with edges having repeated labels}\label{fig:repeat-labels}
\end{figure}
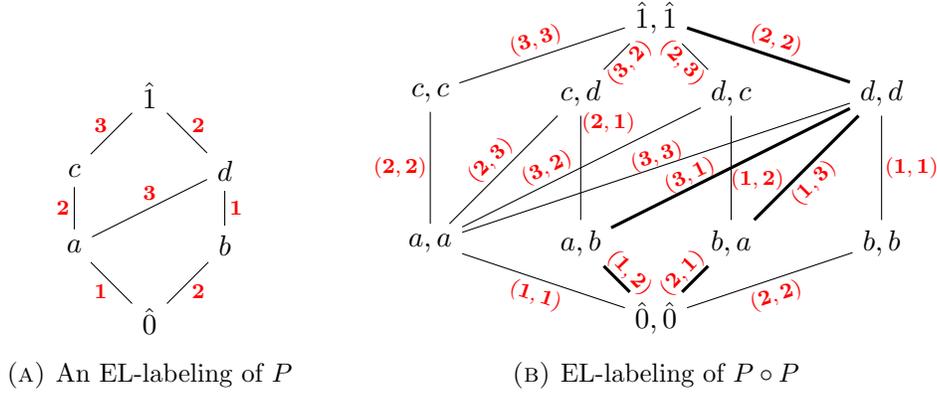
\end{ex}

These formulas recover  some of Stanley's enumerative results for the Boolean lattice as a special case. The combinatorial objects of interest arising from Stanley's theorem are  the 
 $t$-tuples  of permutations in the symmetric group $\sym_n$ with no common ascent, $i$ being an ascent of a permutation $\sigma$ if $\sigma(i)<\sigma(i+1)$. 
For $t=2$ these numbers first appear in work of Carlitz, Scoville and Vaughn \cite{CSV}. 
For arbitrary $t$ they  have also already appeared in the literature; see Abramson and Promislow  \cite{AbramsonPromislowJCTA1978} and $\mathtt{oeis}$ A212855.  

Recall the EL-labeling  for $B_n$ defined by $\lambda(A\coveredby B)= a$ where $B=A\cup\{a\}$. Using this labeling, 
the maximal chains are  in bijection with permutations of $\sym_n$. 
Then Theorem~\ref{thm:mu-P-to-t-Segre-of-P-rank-sel} applies with $\Lambda=[n]$; here $w_n(\Lambda)$ is the set $\mathfrak{S}_n$ of all permutations on $n$ letters, and $e\ell_\lambda(\sigma)=1$ for all $\sigma\in \mathfrak{S}_n$. 
 Theorem~\ref{thm:mu-P-to-t-Segre-of-P-rank-sel}  therefore specializes to the following result of Stanley, stated  with more details in \cite[Proposition 2.5]{LiSu-ECA2025}. 

 For $n\ge 1$, let $w^{(t)}_n$ denote the number of $t$-tuples of permutations in $\sym_n$ with no common ascent. More generally,  for $J\subseteq [n-1]$, let $w^{(t)}_n(J)$ be the number of $t$-tuples of permutations in $\sym_n$ such that their set of common ascents coincides with the complement of $J$ in $[n-1]$.

 \begin{prop}[{See \cite[Theorem 3.1]{RPS-BinomialPosetsJCTA1976}}] \label{prop:mu-Bnt} The M\"obius number of $B^{(t)}_n$  is given by $(-1)^{n} w^{(t)}_n$.  
More generally, for the rank selection $J\subseteq [n-1]$, the M\"obius number  $\mu(B^{(t)}_n(J) )$ of $B^{(t)}_n(J)$ is given by $(-1)^{|J|-1} w^{(t)}_n(J)$.
\end{prop}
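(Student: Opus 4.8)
The plan is to obtain this as the specialization of Theorem~\ref{thm:mu-P-to-t-Segre-of-P-rank-sel} to $P=B_n$ equipped with its standard EL-labeling $\lambda(A\coveredby B)=a$, where $B=A\cup\{a\}$, and $\Lambda=[n]$ the totally ordered chain. First I would record the three facts that make the specialization transparent. A maximal chain $\hat 0=A_0\coveredby A_1\coveredby\cdots\coveredby A_n=\hat 1$ in $B_n$ is completely determined by the order in which the singletons are adjoined, so these chains are in bijection with permutations $\sigma\in\sym_n$, the chain $c_\sigma$ having label word $\lambda(c_\sigma)=\sigma(1)\sigma(2)\cdots\sigma(n)$. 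Consequently $w_n(\Lambda)=\sym_n$, and $e\ell_\lambda(\sigma)=1$ for every $\sigma$, since distinct permutations yield distinct chains. Finally, position $i$ is an ascent of $c_\sigma$ precisely when $\sigma(i)<\sigma(i+1)$, so the ascent set of the chain coincides with the usual ascent set of the permutation.

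With these identifications, Item (2) of Theorem~\ref{thm:mu-P-to-t-Segre-of-P-rank-sel} reads, for each $J\subseteq[n-1]$,
\[
(-1)^{|J|-1}\mu\bigl(B^{(t)}_n(J)\bigr)=\sum_{\substack{(\sigma^1,\ldots,\sigma^t)\in\sym_n^{\times t}\\ \Des((\sigma^1,\ldots,\sigma^t))=J}}\ \prod_{i=1}^t e\ell_\lambda(\sigma^i).
\]
Since each factor $e\ell_\lambda(\sigma^i)$ equals $1$, the right-hand side simply counts the $t$-tuples $(\sigma^1,\ldots,\sigma^t)$ of permutations whose $t$-tuple descent set is exactly $J$. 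By Part (1) of Theorem~\ref{thm:Segre-t_EL}, the condition $\Des((\sigma^1,\ldots,\sigma^t))=J$ is equivalent to saying that the common ascent set $\bigcap_{i=1}^t\Asc(\sigma^i)$ is the complement $[n-1]\setminus J$. Hence the right-hand side is exactly $w^{(t)}_n(J)$, and multiplying through by $(-1)^{|J|-1}$ gives $\mu(B^{(t)}_n(J))=(-1)^{|J|-1}w^{(t)}_n(J)$, the second assertion.

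The first assertion is the special case $J=[n-1]$. Rank-selecting all nontrivial ranks returns the full poset, so $B^{(t)}_n([n-1])=B^{(t)}_n$; moreover $(-1)^{|J|-1}=(-1)^{n-2}=(-1)^n$, and the complement condition becomes ``no common ascent,'' whence $w^{(t)}_n([n-1])=w^{(t)}_n$. Therefore $\mu(B^{(t)}_n)=(-1)^n w^{(t)}_n$. I do not expect a genuine obstacle: the substance is already contained in Theorems~\ref{thm:Segre-t_EL} and~\ref{thm:mu-P-to-t-Segre-of-P-rank-sel}. The only points requiring care are the bookkeeping verifications that $e\ell_\lambda\equiv 1$ and that chain ascents agree with permutation ascents, together with the sign-and-complement conventions involved in passing between the descent set of a $t$-tuple and the common ascent set of its coordinates.
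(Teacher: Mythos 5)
Your proposal is correct and follows exactly the paper's route: the paper also obtains this proposition by specializing Theorem~\ref{thm:mu-P-to-t-Segre-of-P-rank-sel} to $P=B_n$ with the standard EL-labeling, noting that maximal chains biject with permutations (so $e\ell_\lambda\equiv 1$ and chain ascents are permutation ascents), and then translating descent sets into common-ascent complements via Theorem~\ref{thm:Segre-t_EL}(1). Your sign check $(-1)^{|J|-1}=(-1)^{n-2}=(-1)^n$ for $J=[n-1]$ matches the paper's convention as well.
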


\section{EL-labeling for the $t$-fold Segre power of the subspace lattice $B_n(q)$}\label{sec:tfold-subsp-lattice}

In this section we examine the consequences of Theorem~\ref{thm:Segre-t_EL} for the 
$t$-fold Segre power of the subspace lattice $B_n(q)$.

We begin by describing 
the EL-labeling (see Definition \ref{def:EL}) on the subspace lattice $B_n(q)$ given in \cite{YLiqCSV2023}, which, by Theorem~\ref{thm:Segre-t_EL},  extends to an EL-labeling on the $t$-fold Segre power $B^{(t)}_n(q)$.

Let $A$ be the set of all atoms of $B_n(q)$. For a subspace $X\in B_n(q)$ of $\mathbb{F}_q^n$, let $A(X):=\{V\in A|V\leq X\}$. The following two steps define an edge-labeling on $B_n(q)$, which was shown to be an EL-labeling in \cite[Proposition~2.2]{YLiqCSV2023}.
\begin{enumerate}
\item Let $V\in A$ be an atom of $B_n(q)$. It is a $1$-dimensional subspace of $\mathbb{F}_q^n$ with a basis element $v=(v_1,\ldots,v_n)$, $v_i\in \mathbb{F}_q$. Define a map 
$f$: $A \longrightarrow [n]$ by $f(V)=i$, where $i$ is the largest index such that $v_i\ne 0$.  Thus $f(V)$ is the index of the right-most non-zero coordinate of $v$.
\item For a $k$-dimensional subspace $X$ of $\mathbb{F}_q^n$, by Gaussian elimination, there exists a basis of $X$ whose elements have distinct right-most non-zero coordinates. This implies that $f(A(X))$, the image of $A(X)$ under $f$, has $k=\dim(X)$ elements. Let $Y$ be an element of $B_n(q)$ that covers $X$, thus $\dim(Y)=\dim(X)+1$. Then the set $f(A(Y))\setminus f(A(X)) \subset [n]$ has exactly one element. This element will be the label of the edge $(X,\,Y)$. 
\end{enumerate}

\begin{ex}
In the case of $B_4(3)$, if $X=\spn\{\langle 1,0,1,0\rangle,\langle 2,1,0,0\rangle\}$, then $A(X)$ also contains $\spn\{\langle 0,1,1,0\rangle\}$ and $\spn\{\langle 2,2,1,0\rangle\}$. But $X$ does not contain any vector whose right-most non-zero coordinate is the first or the last coordinate. So $f(A(X))=\{2,3\}$ and $|f(A(X))|=2$. Let $Y=\spn\{\langle1,0,1,0\rangle,\langle 2,1,0,0\rangle,\langle 1,0,0,1\rangle\}=\spn\{X,\langle 1,0,0,1\rangle\}$.  Then $f(A(Y))=\{2,3,4\}$, and $f(A(Y))\setminus f(A(X))=\{4\}$. The edge $(X,Y)$ will take label $4$.
\end{ex}

\begin{figure}
\centering
\begin{tikzpicture}
  \node (max) at (0,0) {$\mathbb F_2^3$};
  \node (f) at (-4.5,-1.5) {$\spn\begin{Bmatrix}\langle 1,0,0\rangle,\\
      \langle 0,1,0\rangle\end{Bmatrix}$};
  \node (g) at (-1,-1.5) {$\spn\begin{Bmatrix}\langle 0,1,0\rangle,\\
  \langle 0,0,1\rangle\end{Bmatrix}$};
  \node (h) at (2.5,-1.5) {$\spn\begin{Bmatrix}\langle 1,1,0\rangle,\\
  \langle 1,0,1\rangle\end{Bmatrix}$};
  \node (i) at (5,-1.5) {$\cdots\cdots$};

  \node (a) at (-5, -4) {$\spn\{\langle 1,0,0\rangle\}$};
  \node (b) at (-2, -4) {$\spn\{\langle 0,1,0\rangle\}$};
  \node (c) at (1, -4) {$\spn\{\langle 1,1,0\rangle\}$};
  \node (d) at (4, -4) {$\spn\{\langle 0,1,1\rangle\}$};
  \node (e) at (6,-4) {$\cdots\cdots$};
  \node (min) at (0,-5.5) {$\varnothing$};
  \node (j) at (5,-3) {$\cdots\cdots$};

  \node (f1) at (-2,-0.5) {\textcolor{red}{$\bf3$}};
  \node (g1) at (-0.55,-0.5) {\textcolor{red}{$\bf1$}};
  \node (j1) at (1.2,-0.5) {\textcolor{red}{$\bf1$}};
  \node (af) at (-5,-3) {\textcolor{red}{$\bf2$}};
  \node (bf) at (-3.3,-3) {\textcolor{red}{$\bf1$}};
  \node (bg) at (-1.85,-3.2) {\textcolor{red}{$\bf3$}};
  \node (cf) at (-0.8,-3) {\textcolor{red}{$\bf1$}};
  \node (ch) at (1.3,-3.2) {\textcolor{red}{$\bf3$}};
  \node (dg) at (2.4,-3) {\textcolor{red}{$\bf2$}};
  \node (dh) at (3.6,-3) {\textcolor{red}{$\bf2$}};
  \node (0a) at (-2.5,-5) {\textcolor{red}{$\bf1$}};
  \node (0b) at (-0.7,-4.7) {\textcolor{red}{$\bf2$}};
  \node (0c) at (0.3,-4.7) {\textcolor{red}{$\bf2$}};
  \node (0d) at (1.9,-5) {\textcolor{red}{$\bf3$}};

  \draw (min) -- (a) -- (f)--(max);
  \draw (min)--(b)--(f)--(c)--(min);
  \draw (b)--(g);
  \draw (c)--(h);
  \draw (min)--(d)--(g)--(max);
  \draw (d)--(h)--(max);
  
\end{tikzpicture}
\caption{An EL-labeling of some maximal chains in $B_3(2)$}
\label{fig:EL-lab-B_3(2)}
\end{figure}
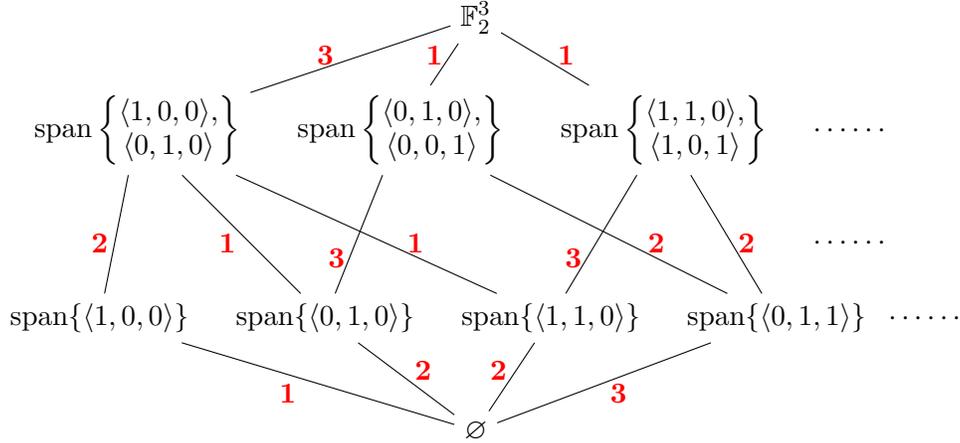
\begin{ex}
 An example of the EL-labeling for $B_3(2)$ defined in this section is shown, partially, in Figure \ref{fig:EL-lab-B_3(2)}. The poset $B_3(2)$ is too large and we can only demonstrate a portion of it in the figure. The left-most chain 
 $$\varnothing\coveredby \spn\{\langle1,0,0\rangle\}\coveredby \spn\{\langle1,0,0\rangle, \langle 0,1,0\rangle\}\coveredby\mathbb F_2^3$$
 is the unique increasing maximal chain with chain label $123$. The two right-most chains are two of the decreasing maximal chains with label $321$. 
\end{ex}

We recall from Section~\ref{sec:SegreEL} the extension of  this EL-labeling on $B_n(q)$  to an EL-labeling on the $t$-fold Segre power $B^{(t)}_n(q)$. See Figure~\ref{fig: EL-lab-Segre-Bnq} (reproduced from Figure 1 in \cite{YLiqCSV2023}) for an example of this labeling for the Segre product $B^{(t)}_n(q)$ when $n=2, t=2$, and $q=2$. Given two elements $X=(X_1,X_2,\dots,X_t)$ and $Y=(Y_1,Y_2,\dots,Y_t)$ in $B^{(t)}_n(q)$ satisfying the covering relation $X\coveredby Y$, we must have $X_i\coveredby Y_i$ in $B_n(q)$ for all $i\in[t]$. Let $\lambda_i$ be the edge label for $(X_i,Y_i)$ in the EL-labeling of $B_n(q)$. The edge $(X,Y)$ in $B^{(t)}_n(q)$ will take the label $(\lambda_1, \lambda_2, \dots, \lambda_t)$ in 
 the product poset $\underbrace{[n]\times\cdots\times [n]}_t$, that is,  with component-wise order relation $<_t$  defined by  $(a_1,\ldots,a_t)<_t (b_1,\ldots,b_t)$ if $a_i\le b_i$ for all $i=1, \ldots , t$, with  $(a_1,\ldots,a_t)=(b_1,\ldots,b_t)$ if $a_i= b_i$ for all $i=1, \ldots , t$.

\begin{figure}
\centering
\begin{tikzpicture}
  \node (max) at (0,4) {$(\mathbb F_2^2,\mathbb F_2^2)$};
  \node (a) at (-4,2) {$\begin{pmatrix}
		\spn\{\langle 1,0\rangle\},\\
	\spn\{\langle 1,0\rangle\}											\end{pmatrix}$};
  \node (b) at (-1,2) {$\begin{pmatrix}
	\spn\{\langle 1,0\rangle\},\\
	\spn\{\langle 0,1\rangle\}											\end{pmatrix}$};
  \node (c) at (1.5,2) {\dots\dots};  
  \node (e) at (4,2) {$\begin{pmatrix}
	\spn\{\langle 1,1\rangle\},\\
	\spn\{\langle 1,1\rangle\}										
		\end{pmatrix}$};
  \node (f) at (-2.1,0.7) {\textcolor{red}{$\mathbf{(1,1)}$}};
  \node (g) at (-0.07,1) {\textcolor{red}{$\mathbf{(1,2)}$}};
  \node (i) at (2.1,0.7) {\textcolor{red}{$\mathbf{(2,2)}$}};
	\node (j) at (-2.1,3.3) {\textcolor{red}{$\mathbf{(2,2)}$}};
  \node (k) at (-0.07,3) {\textcolor{red}{$\mathbf{(2,1)}$}};
	\node (m) at (2.1,3.3) {\textcolor{red}{$\mathbf{(1,1)}$}};
  \node (min) at (0,0) {$(\varnothing,\varnothing)$};
  \draw (min) -- (a) -- (max) -- (b) -- (min)-- (e) -- (max);
\end{tikzpicture}
\caption{An EL-labeling of $B_2(2)\circ B_2(2)$}
\label{B_2(2)}
\label{fig: EL-lab-Segre-Bnq}    
\end{figure}
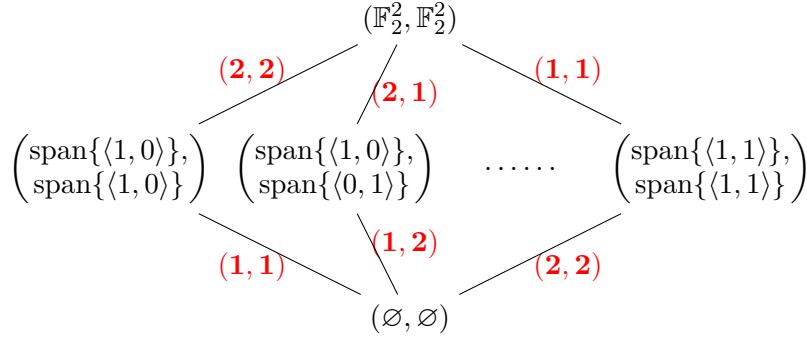

\begin{prop} \label{prop:t_EL} 
The edge-labeling of $B^{(t)}_n(q)$ defined above is an EL-labeling.
\end{prop}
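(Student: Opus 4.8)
The plan is to recognize this proposition as an immediate consequence of the general Theorem~\ref{thm:Segre-t_EL}, applied with $P=B_n(q)$ and $\Lambda=[n]$. First I would verify that the hypotheses of that theorem are satisfied: the subspace lattice $B_n(q)$ is a graded and bounded poset of rank $n$, its rank function being vector-space dimension, with $\hat 0$ the zero subspace and $\hat 1$ all of $\mathbb{F}_q^n$. Hence $B_n^{(t)}(q)$ is defined as in Definition~\ref{def:Segre-prod}, and is itself graded and bounded by the remark immediately following that definition.

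Next I would invoke the cited fact from \cite[Proposition~2.2]{YLiqCSV2023} that the edge-labeling $\lambda\colon\mathcal{E}(B_n(q))\to[n]$ given by the two-step rule of this section, sending a cover $X\coveredby Y$ to the unique element of $f(A(Y))\setminus f(A(X))$, is an EL-labeling of $B_n(q)$. This is the one substantive input, and it is already established, so I would simply cite it rather than reprove it.

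The remaining point is to check that the labeling of $B_n^{(t)}(q)$ described in this section coincides with the extension $\lambda^{(t)}\colon\mathcal{E}(B_n^{(t)}(q))\to[n]^t$ defined in Section~\ref{sec:SegreEL}. This is immediate from the definitions: a cover $X\coveredby Y$ in $B_n^{(t)}(q)$ forces $X_i\coveredby Y_i$ in $B_n(q)$ for all $i\in[t]$ by Definition~\ref{def:Segre-prod}, and the assigned label is $(\lambda_1,\ldots,\lambda_t)$, where $\lambda_i$ is the $B_n(q)$-label of the cover $(X_i,Y_i)$; this is precisely $\lambda^{(t)}((X_1,\ldots,X_t)\coveredby(Y_1,\ldots,Y_t))$. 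I would also note in passing that the component-wise order $<_t$ on $[n]^t$ used in this section agrees with the product order on $\Lambda^t=[n]^t$ used in the general construction, so that ascents and descents are computed identically in both frameworks. With these identifications in place, Theorem~\ref{thm:Segre-t_EL} applies verbatim and yields that $\lambda^{(t)}$ is an EL-labeling of $B_n^{(t)}(q)$.

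Since the essential work was carried out once and for all in Theorem~\ref{thm:Segre-t_EL}, I do not expect any genuine obstacle here; the proof is purely a matter of confirming that $B_n(q)$ meets the general hypotheses and that the concrete labeling introduced in this section is the abstract extension $\lambda^{(t)}$. The only step requiring a small amount of care is this last bookkeeping identification of the two labelings and their orders, which holds by inspection.
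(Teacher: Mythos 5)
Your proposal is correct and matches the paper's proof, which consists of the single line ``This follows from Theorem~\ref{thm:Segre-t_EL}''; your additional bookkeeping (verifying $B_n(q)$ is graded and bounded, citing the EL-property of the base labeling from \cite{YLiqCSV2023}, and identifying the concrete labeling with the abstract extension $\lambda^{(t)}$) simply makes explicit what the paper leaves implicit.
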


\begin{proof}  This follows from Theorem~\ref{thm:Segre-t_EL}.
\end{proof}

 With this EL-labeling, as in the case of $B_n$, Theorem~\ref{thm:mu-P-to-t-Segre-of-P-rank-sel} specializes  to  Stanley's result on the subspace lattice {\cite[Theorem~3.1]{RPS-BinomialPosetsJCTA1976}}, also restated in \cite[Theorem 2.4]{LiSu-ECA2025}.
 Proposition~\ref{prop:mu-Bnt} is the special case $q=1$.  

\begin{thm}[{\cite[Theorem~3.1]{RPS-BinomialPosetsJCTA1976}}] \label{thm:rank-sel-beta-inv-N-of-q} Let $J\subseteq [n-1]$.  Write $J^c=[n-1]\setminus J$, and $\Asc(\sigma)$ for the set of ascents of a permutation $\sigma$. Then 
\[\tilde\beta_{B_n^{(t)}(q)}(J)
=\sum_{\substack
{(\sigma^1,\ldots,\sigma^t)\in\mathfrak{S}_n^{\times t}\\
{J^c=\cap_{i=1}^t \Asc(\sigma^i)}
}} \prod_{i=1}^t q^{\inv(\sigma^i)}\]
The M\"obius number of the $t$-fold Segre power of the subspace lattice is $(-1)^{n-2}W^{(t)}_n(q)$, where 
\[ W^{(t)}_n(q)
:=\sum_{(\sigma_1,\ldots \sigma_t)\in  \sym_n^{\times^t}}
\prod_{i=1}^t q^{\inv(\sigma_i)},\] 
and the sum is over all $t$-tuples of permutations in $\sym_n$ with no common ascent.
\end{thm}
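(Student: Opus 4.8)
The plan is to specialise the general machinery of \Cref{thm:mu-P-to-t-Segre-of-P-rank-sel} to the subspace lattice $P=B_n(q)$, whose EL-labeling $\lambda\colon\mathcal E(B_n(q))\to[n]$ was recalled above and extended to $B_n^{(t)}(q)$ in \Cref{prop:t_EL}. Two ingredients are needed. First I would identify the chain-labels: every maximal chain $c$ in $B_n(q)$ is a complete flag $0=V_0\coveredby V_1\coveredby\cdots\coveredby V_n=\bF_q^n$, and reading off at each step the new rightmost-pivot index shows that $\lambda(c)$ is a permutation $\sigma\in\sym_n$; conversely no non-permutation word occurs, so $e\ell_\lambda(\sigma)=0$ for such words. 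Second I would translate the rank-selection condition using \Cref{thm:Segre-t_EL}(1).

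The key step, and the main obstacle, is the weight evaluation
\[
e\ell_\lambda(\sigma)=q^{\inv(\sigma)}\qquad(\sigma\in\sym_n),
\]
where $\inv(\sigma)$ is the number of inversions. To prove it I would count the flags $c$ with $\lambda(c)=\sigma$ one step at a time. Once $V_{i-1}$ is built, its set of rightmost pivots is $\{\sigma_1,\dots,\sigma_{i-1}\}$, and passing to $V_i$ means adjoining a vector whose rightmost nonzero coordinate lies in position $\sigma_i$. By Gaussian elimination such a vector may be normalised to carry a $1$ in position $\sigma_i$, a $0$ in every earlier pivot position and in all positions to the right of $\sigma_i$, leaving arbitrary entries exactly in the coordinates $k<\sigma_i$ with $k\notin\{\sigma_1,\dots,\sigma_{i-1}\}$; distinct choices yield distinct $V_i$. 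Hence step $i$ contributes $q^{a_i}$ with $a_i=|\{k<\sigma_i:\ k\notin\{\sigma_1,\dots,\sigma_{i-1}\}\}|$. Such a $k$ is precisely a value $\sigma_j$ with $j>i$ and $\sigma_j<\sigma_i$, so $\sum_i a_i=\inv(\sigma)$ and $e\ell_\lambda(\sigma)=\prod_i q^{a_i}=q^{\inv(\sigma)}$. (Equivalently, this is the standard Schubert-cell count for complete flags in $\bF_q^n$.)

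With this in hand the theorem assembles quickly. Applying \Cref{thm:mu-P-to-t-Segre-of-P-rank-sel}(2) to $P=B_n(q)$ together with \Cref{eqn:rank-sel-inv-to-mu}, and using that $e\ell_\lambda$ vanishes off $\sym_n$, gives
\[
\tilde\beta_{B_n^{(t)}(q)}(J)=\sum_{\substack{(\sigma^1,\dots,\sigma^t)\in\sym_n^{\times t}\\ \Des(\sigma^1,\dots,\sigma^t)=J}}\ \prod_{i=1}^t e\ell_\lambda(\sigma^i).
\]
By \Cref{thm:Segre-t_EL}(1) the condition $\Des(\sigma^1,\dots,\sigma^t)=J$ is equivalent to $J^c=\bigcap_{i=1}^t\Asc(\sigma^i)$, and substituting $e\ell_\lambda(\sigma^i)=q^{\inv(\sigma^i)}$ produces the stated formula for $\tilde\beta_{B_n^{(t)}(q)}(J)$. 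Finally, taking $J=[n-1]$, so that the rank-selected poset is all of $B_n^{(t)}(q)$ and $J^c=\varnothing$, turns the common-ascent requirement into \emph{no common ascent}; since $(-1)^{|J|-1}=(-1)^{n-2}$ this gives $\mu\bigl(B_n^{(t)}(q)\bigr)=(-1)^{n-2}W_n^{(t)}(q)$. The specialisation $q=1$ recovers \Cref{prop:mu-Bnt}.
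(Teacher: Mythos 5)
Your proposal is correct, and it follows the same route as the paper: the paper obtains this theorem precisely by specializing \Cref{thm:mu-P-to-t-Segre-of-P-rank-sel} (via \Cref{thm:Segre-t_EL} and \Cref{prop:t_EL}) to the EL-labeling of $B_n(q)$, exactly as you do. The one point where you go beyond the paper is the weight evaluation $e\ell_\lambda(\sigma)=q^{\inv(\sigma)}$: the paper never proves this, instead treating the statement as Stanley's theorem and noting that the $t=2$ chain count was done in \cite[Lemma 2.3]{YLiqCSV2023}, whereas your pivot-by-pivot (Schubert-cell) count supplies exactly this missing lemma, and it is sound --- the normalization shows distinct choices of free entries yield distinct covers, and the free coordinates at step $i$ are counted by the inversions $(i,j)$ with $j>i$ and $\sigma_j<\sigma_i$, summing to $\inv(\sigma)$. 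So your write-up is, in effect, a self-contained version of the paper's citation-based argument.
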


The case $t=2$ was rediscovered in \cite[Lemma 2.3]{YLiqCSV2023} by counting decreasing chains in the EL-labeling described above. 

\section{Conclusion}

Theorem~\ref{thm:mu-P-to-t-Segre-of-P-rank-sel} gives a formula for the rank-selected invariants of $P^{(t)}$ for any EL-shellable  poset $P$. The lattice $\Pi_n$ of set partitions of $[n]$ is an example of a poset with many interesting EL-labelings (see \cite[Section 3.2.2]{WachsPosetTop2007}). In a future paper we plan to investigate rank-selected invariants for the Segre powers of $\Pi_n$. 

The subspace lattice is a $q$-analogue of the Boolean lattice in the sense of Simion \cite{Simion1995}.  We propose to continue this project using Simion's framework, and  investigate the implications  of the present  work for other pairs of posets $(P, Q)$, where $Q$ is a $q$-analogue of $P$.  

\section{Funding information}
 This material is based upon work supported by the National Science Foundation under Grant No. DMS-1928930, while the authors were in residence at the Simons Laufer Mathematical Sciences Research Institute in Berkeley, California, during the  Summer of 2023.


\end{document}